\numberwithin{equation}{section} 
\newcounter{mnote}
\theoremstyle{plain}
\newtheorem{theorem}{Theorem}[section]
\newtheorem{proposition}[theorem]{Proposition}
\theoremstyle{definition}
\theoremstyle{remark}
\newtheorem{remark}[theorem]{Remark}
\newcommand{\vect}[1]{\mathbf{#1}}
\newcommand{\bu}{\vect{u}}
\newcommand{\bv}{\vect{v}}
\newcommand{\bx}{\vect{x}}
\newcommand{\field}[1]{\mathbb{#1}}
\newcommand{\nN}{\field{N}}
\newcommand{\nT}{\field{T}}
\newcommand{\nR}{\field{R}}
\newcommand{\sand}{\quad\text{and}\quad}
\newcommand{\pd}[2]{\frac{\partial #1}{\partial #2}}
\newcommand{\od}[2]{\frac{d #1}{d #2}}
\newcommand{\abs}[1]{\left\lvert#1\right\rvert}
\newcommand{\norm}[1]{\left\lVert#1\right\rVert}
\newcommand{\Lph}[1]{\text{$L^{#1}$}(\Om)}
\newcommand{\Hph}[1]{\text{$H^{#1}$}(\Om)}
\newcommand{\Lp}[1]{\text{${L}^{#1}$}(\Om)}
\newcommand{\Hp}[1]{\text{${H}^{#1}$}(\Om)}
\newcommand{\Wp}[2]{\text{${W}^{#1,#2}$}(\Om)}
\newcommand{\Lpd}[1]{\text{${ L}^{#1}$}(\Om)}
\newcommand{\dU}{\tilde{u}}
\newcommand{\dV}{\tilde{v}}
\newcommand{\dP}{\tilde{p}}
\newcommand{\uu}{u^\ast}
\newcommand{\vv}{v^\ast}
\newcommand{\p}{p^\ast}
\def\aa{\alpha}
\def\la{\lambda}
\def\Dd{\Delta}
\def\Om{\Omega}
\def\pp{\partial}
\newcommand*{\cS}{{\mathcal S}}
\newcommand*{\ttwo}{{\theta_2}}
\begin{document}
\title[abridged data assimilation for 3D $\alpha$ models]{A Data Assimilation Algorithm: The Paradigm of the 3D Leray-$\alpha$ Model of Turbulence.}

\date{February 6, 2017}

%
\author{Aseel Farhat}
\address[Aseel Farhat]{Department of Mathematics\\
               University of Virginia\\
       Charlottesville, VA 22904, USA.}
\email[Aseel Farhat]{af7py@virginia.edu}
\author{Evelyn Lunasin}
\address[Evelyn Lunasin]{Department of Mathematics\\
                United States Naval Academy\\
        Annapolis, MD, 21402 USA.}
\email[Evelyn Lunasin]{lunasin@usna.edu}

\author{Edriss S. Titi}
\address[Edriss S. Titi]{Department of Mathematics, Texas A\&M University, 3368 TAMU,
 College Station, TX 77843-3368, USA.  {Also},
 The Science Program, Texas A\&M University at Qatar, Doha, Qatar.}
   \email{titi@math.tamu.edu}

\begin{abstract}
In this paper we survey the various implementations of a new data assimilation (downscaling) algorithm based on spatial coarse mesh measurements.  As a paradigm, we demonstrate the application of this algorithm to the 3D Leray-$\alpha$ subgrid scale turbulence model. Most importantly, we use  this paradigm to show that it is not always necessary that one has to  collect coarse mesh measurements of all the state variables, that are involved in the underlying evolutionary system, in order to recover the corresponding exact reference solution. Specifically, we show that in the case of the  3D Leray-$\alpha$ model of turbulence  the  solutions of the algorithm, constructed using only coarse mesh observations of {\it any two components of the three-dimensional velocity field}, and without any information of the third component,  converge, at an exponential rate in time, to the corresponding exact reference solution of the 3D Leray-$\alpha$ model. This study serves as an addendum to our recent work on abridged continuous data assimilation for the 2D Navier-Stokes equations. Notably, similar results have also been recently  established for the 3D viscous Planetary Geostrophic circulation model in which we show that coarse mesh measurements of the temperature alone are sufficient for recovering, through our data assimilation algorithm, the full solution; viz.\ the three components of velocity vector field and the temperature.  Consequently, this proves the Charney conjecture for the 3D Planetary Geostrophic model; namely, that the history of  the large  spatial scales of temperature  is sufficient for  determining all the other quantities (state variables) of the model.

\end{abstract}

 \maketitle

  \centerline{\it This paper is dedicated to the memory of Professor Abbas Bahri}

  \vspace{1cm}

 {\bf MSC Subject Classifications:} 35Q30, 93C20, 37C50, 76B75, 34D06.

{\bf Keywords:} 3D Leray-$\alpha$-model, subgrid scale turbulence models, continuous data assimilation, downscaling, Charney conjecture, coarse measurements of velocity.

\bigskip
\section{Introduction}\label{intro }

Data assimilation is a methodology to estimate weather or ocean variables combining (synchronizing) information from observational data with a numerical dynamical (forecast) model. In the context of control engineering, tracing back since the early 70's, data assimilation was also applied to simpler models in \cite{Leunberger1971, Thau1973, Nijmeijer2001}. One of the classical methods of continuous data assimilation, see, e.g., \cite{Daley, Ghil-Halem-Atlas1978}, is to insert observational measurements directly into a model as the latter is being integrated in time. For example, one can insert Fourier low mode observables into the evolution equation for the high modes, then the values for the low modes and high modes are combined to form a complete approximation of the state of the system. This resulting state value is then used as an initial condition to evolve the forecast model using high resolution simulation. For the 2D Navier-Stokes, this approach was considered in \cite{Browning_H_K, Henshaw_Kreiss_Ystrom,Hayden_Olson_Titi,Olson_Titi_2003, Olson_Titi_2008, Korn,B_L_Stuart, Law-etal2014}. The problem when some state variables observations are not available as an input to the numerical forecast model was studied in \cite{Charney1969, Errico85, Ghil1977, Ghil-Halem-Atlas1978, Hoke-Anthes1976, LorencTibaldi80} for simplified numerical forecast models.

Recent  studies in \cite{FJT, FLT2015, FLT_horizontal, FLT_PG, FLT_porous} have established rigorous analytical support pertaining to a continuous data assimilation algorithm similar to that introduced in \cite{Azouani_Olson_Titi}, which is a feedback control algorithm applied to data assimilation (see also \cite{Azouani_Titi}), previously known as nudging or newtonian relaxation.  In  these cases, the authors have analyzed a data assimilation algorithm for different  systems when some of the state variable observations are not supplied in the algorithm.  In other words,  a rigorous analytical support to a data assimilation algorithm that can identify the full state of the system knowing only coarse spatial mesh observational measurements not of full state variables of the model, but only of the selected state variables in the system, were analytically justified.   In this article we summarize our recent results to provide support of the applicability of the scheme in several model equations.  We then demonstrate the application of this algorithm to the  Leray-$\alpha$ subgrid scale turbulence model which serves as an addendum to our recent work on abridged continuous data assimilation for the 2D Navier-Stokes equations.

Starting from the work of \cite{Azouani_Olson_Titi}, the recent works \cite{FJT, FLT2015, FLT_porous, FLT_horizontal,FLT_PG}, mentioned above provided some stepping stones to rigorous justification to some of the earlier conjectures of meteorologists in numerical weather prediction.   For example, the systematic theoretical framework of the proposed global control scheme allowed the authors in \cite{Azouani_Olson_Titi} and \cite{ FJT} to provide sufficient conditions on the spatial resolution of the collected spatial coarse mesh data and the relaxation parameter that guarantees that the approximating solution obtained from this algorithm converges to the corresponding unknown reference solution over time (with the assumption that the observational data measurements are free of noise).  Without access to concrete theoretical analysis, earlier implementation of the ``nudging" algorithm  left geophysicists searching for the optimal or suitable nudging coefficient (or relaxation parameter) through expensive numerical experiments.  Naturally, one wishes for the availability of a sharper estimate  for the operational characteristic parameters  than what the theoretical results gives. However, these recent analytical results, although not sharp, may allow one to track the correct parameter ranges without the expensive numerical experiments. Computational studies implementing these algorithms under drastically more relaxed conditions were demonstrated \cite{Gesho_Olson_Titi, Altaf} for example.

To understand the value of the development of these analytical results stemming from the series of studies, we should mention its valuable impact in meteorology. In weather prediction, we've mentioned earlier the need to analyze the success of a data assimilation algorithm when some state variable observations are not available. Charney's question in \cite{Charney1969, Ghil-Halem-Atlas1978, Ghil1977} of whether temperature observations are enough to determine all the dynamical state of the system  is an important problem in meteorology and engineering. In \cite{Ghil1977}, an analytical argument suggested that the Charney's conjecture is correct, in particular, for the shallow water model. The authors in \cite{Ghil1977} derived a diagnostic system for the velocity field that gives the velocity in terms of first and second time derivatives of the mass field (the geopotential of the free surface of the fluid). A mathematical argument was then presented to justify that the mass field and its first and second time derivatives determine the velocity field fully. Similar diagnostic systems can be derived for other simple primitive equation models. The work in \cite{Ghil1977} gave a precise theoretical formulation of the Charney conjecture for certain simple atmospheric models.

Numerical tests in \cite{Ghil1977} suggested that in practice, it can be hard to implement this method to solve for the velocity field using only measurements on the mass field. Further numerical testing in \cite{Ghil-Halem-Atlas1978} affirmed that it is not certain whether assimilation with temperature data alone will yield initial states of arbitrary accuracy.
The authors in \cite{Ghil-Halem-Atlas1978} considered the primitive equations (the main weather forecast model) and tested and compared different time-continuous data assimilation methods using temperature data alone. In their numerical experiments, they concluded that the accuracy of the assimilation is highly dependent on  assimilation method used and  on the integrity of the measured observational temperature data.  A relevant recent numerical study on a data assimilation algorithm for the 2D B\'enard system \cite{Altaf}, inspired by the work in \cite{FJT}, showed that it is sufficient to use coarse velocity measurements in the algorithm to recover the full true state of the system. On the other hand, it was concluded in \cite{Altaf} that assimilations using coarse temperature measurements only will not always recover the true state of the system. It was observed that the convergence to the true state using temperature measurements only is actually sensitive to the amount of noise in the measured data as well as to the spacing (the sparsity of the collected data) and the time-frequency of such measured temperature data. These results in \cite{Altaf} are consistent with the results of the earlier numerical experiments  in \cite{Ghil1977} and \cite{Ghil-Halem-Atlas1978}.

These results may indicate that the answer to Charney's question is negative for practical issues we have with our measuring equipments or our numerical solving techniques. More recently, in \cite{FLT_PG}, we proposed an improved continuous data assimilation algorithm for the 3D Planetary Geostrophic model that requires observations of the temperature only. We provided a rigorous mathematical argument that temperature history of the atmosphere will determine other state variables for this specific planetary scale model, thus justifying theoretically Charney's conjecture for this model. Numerical implementation of our algorithm for the 3D Planetary Geostrophic model is subject to a new work to compare with the numerical results obtained in \cite{Altaf},  \cite{Ghil-Halem-Atlas1978} and \cite{Ghil1977}.

We will review relevant results starting from the algorithm introduced in \cite{Azouani_Olson_Titi}. The algorithm in \cite{Azouani_Olson_Titi} can be formally described as follows: suppose that $u(t)$ represents a solution of some dynamical system governed by an evolution equation of the type
\begin{align}\label{dissipative}
\od{u}{t} = F(u),
\end{align}
where the initial data $u(0)= u_{in}$ is missing. Let $I_h(u(t))$ represent an interpolant operator based on the observations of the  system at a coarse spatial resolution of size $h$, for $t\in [ 0,T ]$.
The algorithm proposed in \cite{Azouani_Olson_Titi} is to construct a solution $v(t)$ from the observations that satisfies the equations
\begin{subequations}\label{du}
\begin{align}
&\od{v}{t} = F(v) - \mu (I_h(v)- I_h(u)), \\
&v(0)= v_{in},
\end{align}
\end{subequations}
where $\mu>0$ is a relaxation (nudging) parameter and $v_{in}$ is taken to be arbitrary initial data.
As mentioned from the previous literature, this particular algorithm was designed to work for general  dissipative dynamical systems of the form \eqref{dissipative} that are known to have global, in time, solutions, finite-dimensional global attractor and a finite set of determining parameters (see, e.g., \cite{C_O_T, F_M_R_T, Foias_Prodi, Foias_Temam, Foias_Temam_2, Foias_Titi, Holst_Titi, Jones_Titi, Jones_Titi_2} and references therein). Lower bounds on $\mu>0$ and upper bounds on $h>0$ can be derived such that the approximate solution $v(t)$ is converging in time to the reference solution $u(t)$. These estimates are not sharp (see the numerical results in \cite{Altaf} and \cite{Gesho_Olson_Titi}) since their derivation uses the existing estimates for the global solution in the global attractor of these dissipative systems.

In the context of the incompressible 2D Navier-Stokes equations (NSE), the authors in \cite{Azouani_Olson_Titi} studied the conditions under which the approximate solution $v(t)$ obtained by the algorithm in \eqref{du} converges to the reference solution $u(t)$ over time (see also \cite{Gesho_Olson_Titi}).  In \cite{ALT2014}, it was shown that approximate solutions constructed using observations on all three components of the unfiltered velocity field converge in time to the reference solution of the 3D Navier-Stokes-$\alpha$ model. Another application of this algorithm i for the three-dimensional Brinkman-Forchheimer-Extended Darcy model was introduced in \cite{MTT2015}. The authors in \cite{J_M_T} studied the convergence of the algorithm to the reference solution in the case of the two-dimensional subcritical surface quasi-geostrophic (SQG) equation. The convergence of this synchronization algorithm for the 2D NSE, in higher order (Gevery class) norm and in $L^\infty$ norm, was later studied in \cite{B_M}. An extension of the approach in \cite{Azouani_Olson_Titi} to the case when the observational data contains stochastic noise was analyzed in \cite{Bessaih-Olson-Titi}. A study of the the algorithm for the 2D NSE when the measurements are obtained discretely in time and are contaminated by systematic error is presented in \cite{FMTi}. More recently in \cite{M_T}, the authors obtain uniform in time estimates for the error between the numerical approximation given by the Post-Processing Galerkin method of the downscaling algorithm and the reference solution, for the 2D NSE.   Notably, this uniform in time error estimates provide a strong evidence for the practical reliable numerical implementation of this algorithm.

In \cite{FJT},  a continuous data assimilation scheme for the two-dimensional incompressible B\'enard convection problem was introduced. The data assimilation algorithm in \cite{FJT} constructed the approximate solutions for the velocity $u$ and temperature fluctuations using only the observational data of the velocity field and without any measurements for the temperature fluctuations. In \cite{FLT2015}, we introduced an abridged dynamic continuous data assimilation for the 2D NSE inspired by the recent algorithms introduced in \cite{Azouani_Olson_Titi, FJT}. There we establish convergence results for the improved algorithm where the observational data needed to be measured and inserted into the model equation is reduced or subsampled.   Our algorithm required observational measurements of only one component of the velocity vector field.  The underlying analysis were made feasible by taking advantage of the divergence free condition on the velocity field. Our work in \cite{FLT2015}, was then applied and extended for the  convergence analysis for a 2D B\'enard convection problem, where the approximate solutions  constructed using observations in only the horizontal component of the two-dimensional velocity field and without any measurements on the temperature, converge in time to the reference solution of the 2D B\'enard system. This was a progression of the recent result in \cite{FJT} where convergence results were established, given that observations are known on all of the components of the velocity field and without any measurements of the temperature. In \cite{FLT_porous} we propose that a data assimilation algorithm based on temperature measurements alone can be designed for the B\'enard convection in porous medium.  In this work it was established that requiring sufficient amount of coarse spatial observational measurements of only the temperature measurements  as input is able to recover the full state of the system. Subsequently, in \cite{FLT_PG} we proposed an improved data assimilation  algorithm for recovering the exact full reference solution (velocity and temperature field) of the 3D Planetary Geostrophic model, at an exponential rate in time, by employing  coarse spatial mesh observations of the temperature alone. In particular, we presented a rigorous justification to an earlier conjecture of Charney which states that temperature history of the atmosphere, for certain simple atmospheric models, determines all other state variables.

\bigskip

\section{Application to Turbulence models}\label{alpha-model}
All the analysis of the proposed data assimilation algorithm assumes the global existence of the underlying model and uses previously known estimates.  It is for this reason that we are not able to prove similar results for the 3D NSE case, even though numerical testing may be applicable and feasible.  Note, however, that we are able to formulate the analytical setting for a family of globally well-posed subgrid scale turbulence models belonging to a family called $\alpha-$models of turbulence.  These are simplified models through an averaging process that is designed to capture the large scale dynamics of the flow and at the same time provide reliable closure model to the averaged equations.  The first member of the family is introduced in the late 1990's called the Navier-Stokes-$\alpha$ (NS-$\alpha$) model (also known as Lagrangian averaged Navier-Stokes-$\alpha$ (LANS-$\alpha$)  or viscous Camassa-Holm equations \cite{CH98, CH99, CH00, FHTP, FHTM}) is written as follows:
\begin{subequations}
\begin{align}
\partial_t v + u\cdot\nabla v + \nabla u \cdot v + \nabla p = \nu\Delta v + f,\\
\nabla\cdot u=0, \mbox{ and } v = u-\alpha^2\Delta u.
\end{align}
\end{subequations}
Unlike other subgrid closure models which normally add some additional dissipative process, this new modeling approach regularizes the NSE by restructuring the distribution of the energy in the wave number $k > 1/\alpha$ of the inertial range \cite{FHTP}. In other words, NS-$\alpha$ smooths the nonlinearity of the NSE, instead of enhancing dissipation.   Many other $\alpha$ models, such as the Leray-$\alpha$ \cite{CHOT}, the Clark-$\alpha$ \cite{CHTi}, the Navier-Stokes-Voigt (NSV) equation \cite{KLT07, KT07, Osk73, Osk80}, and the models we have introduced, namely, the modified Leray-$\alpha$ (ML-$\alpha$) \cite{ILT}, the simplified Bardina model (SBM) \cite{CLT, LL2006}, and the NS-$\alpha$-like models \cite{Olson_Titi_2007},  were inspired by this regularization technique.  These models can be represented by a generalized model of the form
\begin{subequations}\label{e:pde}
\begin{align}
\partial_t u + Au + (M u\cdot \nabla) (N u) + {\chi} \nabla (M u)^T\cdot (N u) 
  + \nabla p &= f(x),\\
\nabla\cdot u &= 0,\\
u(0,x) & = u_{in}(x),
\end{align}
\end{subequations}
where
$A$, $M$, and $N$ are bounded linear operators having certain mapping properties,
${\chi}$ is either 1 or 0,
${\theta}$  controls the strength of the dissipation operator $A$,
and the two parameters which control the degree of smoothing in the operators $M$ and $N$, respectively, are ${\theta_1}$ and ${\theta_2}$. Table 1 summarizes certain $\alpha-$models of turbulence.

\begin{table}[h!]
\caption{\footnotesize
Some special cases of the model \eqref{e:pde} with $\alpha>0$,
and with $\mathcal{S}=(I-\alpha^2\Delta)^{-1}$
and $\mathcal{S}_{\theta_2}~=~[I~+~(-\alpha^2\Delta)^{\theta_2}]^{-1}$.
}
\begin{center}
\begin{tabular}{cccccccc}
\hline
Model  &  {\bf NSE}      & {\bf Leray-$\alpha$} & {\bf ML-$\alpha$} & {\bf SBM}        & {\bf NSV}               & {\bf NS-$\alpha$}       & {\bf NS-$\alpha$-like} \\ \hline\hline
 $A$   & $-\nu\Dd$ & $-\nu\Dd$      & $-\nu\Dd$   & $-\nu\Dd$  & $-\nu\Dd\cS$ & $-\nu\Dd$         & $\nu (-\Dd)^{\theta}$\\
 $M$   & $I$       & $\cS$     & $I$         & $\cS$ & $\cS$        & $\cS$        & $\cS_\ttwo$\\
 $N$   & $I$       & $I$            & $\cS$  & $\cS$ & $\cS$        & $I$               & $I$\\
$\chi$ & 0         &0               &0            & 0          &0                  &1                  &1\\
\hline
\end{tabular}
\end{center}
\label{t:spec}
\end{table}

All of the models just mentioned have global regular solutions and posses fewer degrees of freedom than the NSE.  Moreover, mathematical analysis also proved that the solutions to these models converge to the solution of NSE in the limit as the filter width parameter $\alpha$ tends to zero.  In addition, several of the $\alpha$-models of turbulence have been tested against averaged empirical data collected from turbulent channels and pipes, for a wide range of Reynolds numbers (up to $17 \times 10^6$) \cite{CH98, CH99, CH00}.  The successful analytical, empirical and computational aspects (see for example \cite{FHTP, HHPW08, LKTT1, LKTT2} and references therein) of  the alpha turbulence models have attracted numerous applications, see for example \cite{BLT07} for application to the quasi-geostrophic equations, \cite{KhTi07} for application to Birkhoff-Rott approximation dynamics of vortex sheets of the 2D Euler equations, \cite{LinTi07, MMP05a, MMP05b} for applications to incompressible magnetohydrodynamics equations.  See also \cite{LLT12a, LLT12b} for the $\alpha$-regularization of the inviscid 3D Boussinesq equation. A unified analysis of an additional family of $\alpha$-type regularized model, also called a  general family of regularized Navier-Stokes and MHD models on $n$-dimensional smooth compact Riemannian manifolds with or without boundary, with $n \geq 2$ is studied in \cite{HLT10}.  For  approximate deconvolution models of turbulence see \cite{ Layton_Neda2007, Layton_Rebholz2012}. For other closure models see \cite{BI_Layton} and references therein.

The proposed algorithms in \cite{FJT, FLT2015, FLT_horizontal, FLT_PG, FLT_porous} sparked an idea that perhaps for this $\alpha$-model one can construct approximate solutions using only observations in the horizontal components and without any measurements on the vertical component of the velocity field converge in time to the reference solution. This is indeed the case, made possible by taking advantage of the divergence free condition for the velocity field. Similar results can be claimed for the other certain $\alpha$-models. In this paper, we apply the data assimilation algorithm for the case of  3D Leray-$\alpha$ model which we recall below \cite{CHOT}:
\begin{subequations}\label{Leray-alpha}
\begin{align}
\pp_t v  -\nu \Delta v+ (u\cdot\nabla) v &= -\nabla p + f,\\
\nabla \cdot u &=  \nabla \cdot v = 0,\\
v &= u-\aa^2\Dd u,\\
v(0, x,y,z) &= v_{in}(x,y,z),\\
u , v \mbox{ and } p  \mbox{ are periodic, with basic periodic box } \Omega &= [0, L]^3.
\end{align}
\end{subequations}
The nonlinearity is advected by the smoother velocity field and notice that consistent with all the other alpha models, the above system is the Navier-Stokes system of equations when $\aa = 0$, i.e. $u = v$.

Let $I_h(\varphi)$ represent an interpolant operator based on the observational measurements of the scalar function $\varphi$ at a coarse spatial resolution of size $h$.
Given the viscosity $\nu$, the proposed algorithm for reconstructing $u(t)$ and $v(t)$ from only the horizontal observational measurements, which are represented by means of the interpolant operators  $I_h(v_1(t))$ and  $I_h(v_2(t))$ for $t\in [0, T]$ is given by the system
\begin{subequations}\label{Leray-alpha-DA}
\begin{align}
\pp_t \vv_1  -\nu \Delta \vv_1+ (\uu\cdot\nabla) \vv_1 &= -\pp_x \p + \mu\left(I_h(v_1) - I_h(\vv_1)\right)+f_1,\\
\pp_t \vv_2  -\nu \Delta \vv_2+ (\uu\cdot\nabla) \vv_2 &= -\pp_y \p + \mu\left(I_h(v_2) - I_h(\vv_2)\right)+f_2,\\
\pp_t \vv_3  -\nu \Delta \vv_3+ (\uu\cdot\nabla) \vv_3 &= -\pp_x \p +f_3,\\
\nabla \cdot \uu &=  \nabla \cdot \vv = 0,\\
\vv &= \uu-\aa^2\Dd \uu,\\
\vv(0, x,y,z) &= \vv_{in}(x,y,z).
\end{align}
\end{subequations}
supplemented with periodic boundary conditions, where $\mu$ is again a positive parameter which relaxes (nudges) the coarse spatial scales of $v$ toward those of the observed data.

Consequently, $\vv(t,x,y,z)$ is the approximating velocity field, with $\vv(0,x,y,z) = \vv_{in}(x,y,z)$ taken to be arbitrary. We note that any data assimilation algorithm using two out of three components of the velocity field also works. Here, observational data of the horizontal components $I_h(v_1(t))$ and $I_h(v_2(t))$ were chosen as an example.

We will consider interpolant observables given by linear interpolant operators $I_h: \Hph{1} \rightarrow \Lph{2}$, that approximate identity and satisfy the approximation property
\begin{align}\label{app}
\norm{\varphi - I_h(\varphi)}_{\Lph{2}} \leq \gamma_0h\norm{\varphi}_{\Hph{1}}, 
\end{align}
for every $\varphi$ in the Sobolev space $\Hph{1}$.  One example of an interpolant observable of this type is the orthogonal projection onto the low Fourier modes with wave numbers $k$ such that $|k|\leq 1/h$. A more physical example are the volume elements that were studied in \cite{Jones_Titi}. A second type of linear interpolant operators $I_h: \Hph{2}\rightarrow\Lph{2}$, that satisfy the approximation property
\begin{align}\label{app2}
\norm{\varphi - I_h(\varphi)}_{\Lph{2}} \leq \gamma_1h\norm{\varphi}_{\Hph{1}} + \gamma_2h^2\norm{\varphi}_{\Hph{2}},
\end{align}
for every $\varphi$ in the Sobolev space $\Hph{2}$, can be considered with this algorithm. An example of this type of interpolant observables is given by the measurements at a discrete set of nodal points in $\Omega$ (see Appendix A in \cite{Azouani_Olson_Titi}). The treatment for the second type of interpolant is slightly more technical (see, e.g. \cite{FJT, FLT2015}), and thus we won't consider it here for the sake of keeping this note concise.

We prove an analytic upper bound on the spatial resolution $h$ of the observational measurements and an analytic lower bounds on the relaxation parameter $\mu$ that is needed in order for the proposed algorithm \eqref{Leray-alpha-DA} to recover the reference solution of the 3D Leray-$\alpha$ system \eqref{Leray-alpha} that corresponds to the coarse measurements. These bounds depend on physical parameters of the system, the Grashof number as an example. We remark that extensions of algorithm \eqref{Leray-alpha-DA}, for the cases of measurements with stochastic noise and of discrete spatio-temporal measurements with systematic error, can be established by combining the ideas we present here with the techniques reported in \cite{Bessaih-Olson-Titi} and \cite{FMTi}, respectively.

\bigskip

\section{Preliminaries}\label{pre}

We define $\mathcal{F}$ to be the set of divergence free $L$-periodic trigonometric polynomial vector fields from $\nR^3 \rightarrow \nR^3$, with spatial average zero over $\Omega$. We denote by $\Lp{2}$, $\Wp{s}{p}$, and $\Hp{s}\equiv \Wp{s}{2}$ the usual Sobolev spaces in three-dimensions, and we denote by $H$ and $V $ the closure of $\mathcal{F}$ in $\Lp{2}$ and $\Hp{1}$, respectively.

We denote the dual of $V $ by $V ^{'}$ and the Helmholz-Leray projector from $\Lp{2}$ onto $H$ by $P_\sigma$.
The Stokes operator $A:V  \rightarrow V ^{'}$, can now be expressed as
$$ Au = -P_\sigma \Delta u, $$
for each $u, v \in V $.  We observe that in the periodic boundary condition case $A~= ~-\Delta.$ The linear operator $A$ is self-adjoint and positive definite with compact inverse $A^{-1}: H \rightarrow H$. Thus, there exists a complete orthonormal set of eigenfunctions $w_i$ in $H$ such that $Aw_i= \lambda_iw_i$ where $0<\lambda_i\leq\lambda_{i+1}$ for $i\in \nN$.  The domain of $A$ will
be written as $\mathcal{D}(A)= \{u\in V : Au \in H\}$.

We define the inner products on $H$ and $V$ respectively by
\[(\bu,\bv)=\sum_{i=1}^3\int_{\nT^3} u_iv_i\,d\bx
\sand
((\bu,\bv))=\sum_{i,j=1}^3\int_{\nT^3}\partial_ju_i\partial_jv_i\,d\bx,
\]
and their associated norms $(\bu,\bu)^{1/2} =\norm{u}_{\Lp{2}}$, $((\bu,\bu))^{1/2}=\norm{A^{1/2}\bu}_{\Lp{2}}$.
Note that $((\cdot,\cdot))$ is a norm due to the Poincar\'e inequality
\begin{equation}\label{poincare}
 \|\phi\|_{\Lp{2}}^2\leq \lambda_1^{-1}\|\nabla\phi\|_{\Lp{2}}^2, \quad\mbox{ for all }  \phi \in V,
\end{equation}
where $\lambda_1$ is the smallest eigenvalue of the operator $A$ in  three-dimensions,  subject to periodic boundary conditions.

We use the the following inner products in $\Hp{1}$ and $\Hp{2}$, respectively 
\[((u,v))_{\Hp{1}} = \lambda_1 \left[(u,v) + \alpha^2(A^{1/2} u, A^{1/2} v)\right], \]
and
\[((u,v))_{\Hp{2}} = \lambda_1^2\left[(u,v) + 2\alpha^2(A^{1/2} u, A^{1/2} v) + \alpha^4(A u, A v)\right].
\]
The above inner products were used in \cite{CHOT} so that the norms in $\Hp{1}$ and $\Hp{2}$ are dimensionally homogeneous. Using these definitions, one can observe that
\begin{align}\label{chot}
\lambda_1 \norm{v}_{\Lp{2}} \leq \norm{u}_{\Hp{2}}\leq 2 \lambda_1 \norm{v}_{\Lp{2}},
\end{align}
where $v = u - \alpha^2\Delta u$.

\begin{remark}
We will use these notations indiscriminately for both scalars and vectors, which should not be a source of confusion.
\end{remark}

Let $Y$ be a Banach space.  We denote by $L^p([0,T];Y)$ the space of (Bochner) measurable functions $t\mapsto w(t)$, where $w(t)\in Y$, for a.e. $t\in[0,T]$, such that the integral $\int_0^T\|w(t)\|_Y^p\,dt$ is finite.

Hereafter, $c$ denotes a universal dimensionless positive constant. Our estimates for the nonlinear terms will involve the Sobolev inequality in three-dimensions:
\begin{align}\label{sobolev}
\norm{u}_{\Lp{\infty}} \leq c \lambda_1^{-1/4} \norm{u}_{\Hp{2}}.
\end{align}
Furthermore, inequality \eqref{app} implies that
\begin{align}\label{app_F}
\norm{w-I_h(w)}_{\Lp{2}}^2\leq c_0^2h^2\norm{A^{1/2}w}_{\Lp{2}}^2,
\end{align}
for every $w\in V$, where $c_0=\gamma_0$.

Here, $G$ denotes the the Grashof number in three-dimensions
\begin{align}\label{Grashof_3}
G = \frac{\norm{f}_{\Lp{2}}}{\nu^2\lambda_1^{3/4}}.
\end{align}

We recall that the 3D Leray-$\alpha$ model \eqref{Leray-alpha}, subject to periodic boundary conditions,  is well-posed and posses a finite-dimensional compact global attractor.
\begin{theorem}[Existence and uniqueness]\cite{CHOT}
If $v_{in} \in V$ and $f\in H$, then, for any $T>0$, the 3D Leray-$\alpha$ model \eqref{Leray-alpha} has a unique global strong solution $v(t, x,y,z)= (v_1(t, x,y,z), v_2(t , x,y,z), v_3(t , x,y, z))$ that satisfies
\begin{align*}
v \in C([0,T];V ) \cap L^2([0,T];\mathcal{D}(A)), \qquad \text{and} \qquad \od{v}{t} \in L^2([0,T];H).
\end{align*}
Moreover, the system admits a finite dimensional global attractor $\mathcal{A}$ that is compact in $H$.
\end{theorem}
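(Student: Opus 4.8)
The plan is to establish existence via a Galerkin approximation together with a priori energy estimates, then uniqueness and continuous dependence by a difference estimate, and finally the existence and finite-dimensionality of the global attractor through the Constantin--Foias--Temam volume-contraction machinery. The structural feature I would exploit throughout is that the advecting field $u = (I+\alpha^2 A)^{-1}v$ is two derivatives smoother than $v$, so that combining \eqref{chot} and \eqref{sobolev} yields $\norm{u}_{\Lp{\infty}} \leq c\lambda_1^{3/4}\norm{v}_{\Lp{2}}$; this is precisely what makes the nonlinearity far more tractable here than in the genuine 3D Navier--Stokes case.

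First I would apply $P_\sigma$ to \eqref{Leray-alpha} to eliminate the pressure and recast the system as $\od{v}{t} + \nu A v + P_\sigma[(u\cdot\nabla)v] = f$ with $u = (I+\alpha^2 A)^{-1}v$, and set up the Galerkin truncation $v_n$ on the span of the first $n$ eigenfunctions of $A$. Pairing the projected equation with $v_n$ in $H$, the trilinear term $((u_n\cdot\nabla)v_n, v_n)$ vanishes because $\nabla\cdot u_n = 0$, and after Young's inequality and \eqref{poincare} I obtain $\od{}{t}\norm{v_n}_{\Lp{2}}^2 + \nu\norm{A^{1/2}v_n}_{\Lp{2}}^2 \leq (\nu\lambda_1)^{-1}\norm{f}_{\Lp{2}}^2$, which gives uniform $L^\infty([0,T];H)\cap L^2([0,T];V)$ bounds. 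To reach the strong-solution class I would then pair with $Av_n$; the only term needing care is $((u_n\cdot\nabla)v_n, Av_n)$, which is bounded by $\norm{u_n}_{\Lp{\infty}}\norm{A^{1/2}v_n}_{\Lp{2}}\norm{Av_n}_{\Lp{2}} \leq c\lambda_1^{3/4}\norm{v_n}_{\Lp{2}}\norm{A^{1/2}v_n}_{\Lp{2}}\norm{Av_n}_{\Lp{2}}$, whose top derivative I absorb into $\tfrac{\nu}{2}\norm{Av_n}_{\Lp{2}}^2$. A Gronwall argument using the already-controlled $\int_0^T\norm{A^{1/2}v_n}_{\Lp{2}}^2\,dt$ and $v_{in}\in V$ then produces uniform $L^\infty([0,T];V)\cap L^2([0,T];\mathcal{D}(A))$ bounds, from which $\od{v_n}{t}$ is bounded in $L^2([0,T];H)$. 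By the Aubin--Lions lemma I extract a subsequence converging strongly in $L^2([0,T];V)$ to a limit $v$; since $u_n=(I+\alpha^2 A)^{-1}v_n\to u$ with the same strong convergence, the nonlinear term passes to the limit, and the stated regularity, including $v\in C([0,T];V)$, follows from the uniform bounds.

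For uniqueness I would take two strong solutions $v^{(1)},v^{(2)}$, set $w=v^{(1)}-v^{(2)}$ and $\tilde u=(I+\alpha^2 A)^{-1}w$, and pair the difference equation with $w$. Writing the nonlinear difference as $(\tilde u\cdot\nabla)v^{(1)} + (u^{(2)}\cdot\nabla)w$, the second piece is orthogonal to $w$, while the first obeys $|((\tilde u\cdot\nabla)v^{(1)},w)| \leq \norm{\tilde u}_{\Lp{\infty}}\norm{A^{1/2}v^{(1)}}_{\Lp{2}}\norm{w}_{\Lp{2}} \leq c\lambda_1^{3/4}\norm{A^{1/2}v^{(1)}}_{\Lp{2}}\norm{w}_{\Lp{2}}^2$, giving $\od{}{t}\norm{w}_{\Lp{2}}^2 \leq c\lambda_1^{3/4}\norm{A^{1/2}v^{(1)}}_{\Lp{2}}\norm{w}_{\Lp{2}}^2$. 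Since $\norm{A^{1/2}v^{(1)}}_{\Lp{2}}\in L^2([0,T])$, Gronwall forces $w\equiv 0$ and simultaneously delivers continuous dependence on the initial data.

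Finally, for the attractor I would recast the energy inequality in the dissipative form $\od{}{t}\norm{v}_{\Lp{2}}^2 + \nu\lambda_1\norm{v}_{\Lp{2}}^2 \leq (\nu\lambda_1)^{-1}\norm{f}_{\Lp{2}}^2$ to produce an absorbing ball in $H$, then apply the uniform Gronwall lemma to the $V$-estimate for an absorbing ball in $V$; as $V\hookrightarrow H$ is compact, this gives a compact absorbing set and hence a global attractor $\mathcal{A}$ compact in $H$. The deepest step, and the one I expect to be the main obstacle, is the finite-dimensionality. Here I would linearize the solution map along a trajectory on $\mathcal{A}$, track the evolution of $N$-dimensional infinitesimal volumes, and estimate the trace $\mathrm{Tr}\big((\nu A + \mathcal{L}(t))\circ Q_N\big)$ of the linearized operator projected onto $N$-dimensional subspaces, where $\mathcal{L}(t)$ collects the (smoothing) linearized advection terms. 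Controlling the nonlinear contribution by a Lieb--Thirring inequality together with \eqref{chot} and \eqref{sobolev}, one shows that the sum of the first $N$ global Lyapunov exponents becomes negative once $N$ exceeds an explicit threshold depending on $G$, $\alpha$ and $\lambda_1$; by the Constantin--Foias--Temam theory this bounds the Hausdorff and fractal dimensions of $\mathcal{A}$ and completes the proof.
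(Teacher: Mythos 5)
The paper itself offers no proof of this statement---it is quoted directly from the cited reference \cite{CHOT}---and your proposal reconstructs essentially the same standard argument given there: Galerkin approximation with $H$- and $V$-level energy estimates made global by the key smoothing bound $\norm{u}_{\Lp{\infty}} \leq c\lambda_1^{3/4}\norm{v}_{\Lp{2}}$ (which follows from \eqref{chot} and \eqref{sobolev} and is exactly what distinguishes Leray-$\alpha$ from the 3D Navier--Stokes equations), uniqueness and continuous dependence via a difference estimate, and the attractor via absorbing balls in $H$ and $V$ plus the Constantin--Foias--Temam trace formula for finite dimensionality. Your outline is correct and takes the same route as the cited source, so there is nothing of substance to flag.
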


The following bounds on solutions $v$ of \eqref{Leray-alpha} can be proved using the estimates obtained in \cite{CHOT}.
\begin{proposition} \cite{CHOT}
Let $\tau>0$ be arbitrary, and let $G$ be the Grashof number given in \eqref{Grashof_3}. Suppose that $v$ is a solution of \eqref{Leray-alpha}, then there exists a time $t_0>0$ such that for all $t\geq t_0$ we have
\begin{subequations}
\begin{align}
\norm{v(t)}_{\Lp{2}}^2 \leq 2\nu^2\lambda_1^{-1/2}G^2,
\end{align}
and
\begin{align}\label{Leray-G}
\int_t^{t+\tau} \norm{\nabla v(s)}_{\Lp{2}}^2\,ds \leq 2(1+\tau\nu\lambda_1^{1/2})\nu G^2.
\end{align}
\end{subequations}
\end{proposition}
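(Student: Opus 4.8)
The plan is to derive both bounds from the basic energy balance for \eqref{Leray-alpha}, exactly as one does for the Navier--Stokes equations, the only structural input being the divergence-free condition on $u$. First I would apply the Helmholtz--Leray projector $P_\sigma$ and take the $\Lp{2}$ inner product of the momentum equation in \eqref{Leray-alpha} with $v$. The pressure term drops since $(\nabla p, v) = -\int_\Omega p\,(\nabla\cdot v)\,dx = 0$ by \eqref{Leray-alpha} and periodicity, and the nonlinear term vanishes as well: because $\nabla\cdot u = 0$,
\[
((u\cdot\nabla) v, v) = \tfrac12\int_\Omega u\cdot\nabla\abs{v}^2\,dx = -\tfrac12\int_\Omega (\nabla\cdot u)\abs{v}^2\,dx = 0.
\]
Notice this cancellation uses only $\nabla\cdot u = 0$ and not the relation $v = u - \aa^2\Dd u$, so the smoothing plays no role in these particular estimates. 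This yields the energy balance
\[
\tfrac12\od{}{t}\norm{v}_{\Lp{2}}^2 + \nu\norm{\nabla v}_{\Lp{2}}^2 = (f, v).
\]
These manipulations are legitimate for the strong solution provided by the existence and uniqueness theorem above, since $v \in L^2([0,T];\mathcal{D}(A))$ and $\od{v}{t}\in L^2([0,T];H)$ render every term integrable; alternatively they can be performed at the Galerkin level and passed to the limit.

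Next I would estimate the forcing by $(f,v) \leq \norm{f}_{\Lp{2}}\norm{v}_{\Lp{2}} \leq \tfrac{1}{2\nu\lambda_1}\norm{f}_{\Lp{2}}^2 + \tfrac{\nu\lambda_1}{2}\norm{v}_{\Lp{2}}^2$ and absorb the last term using the Poincar\'e inequality \eqref{poincare} in the form $\nu\norm{\nabla v}_{\Lp{2}}^2 \geq \nu\lambda_1\norm{v}_{\Lp{2}}^2$. This produces the linear differential inequality
\[
\od{}{t}\norm{v}_{\Lp{2}}^2 + \nu\lambda_1\norm{v}_{\Lp{2}}^2 \leq \frac{\norm{f}_{\Lp{2}}^2}{\nu\lambda_1}.
\]
A Gronwall argument then gives $\norm{v(t)}_{\Lp{2}}^2 \leq \norm{v(0)}_{\Lp{2}}^2\, e^{-\nu\lambda_1 t} + \tfrac{\norm{f}_{\Lp{2}}^2}{\nu^2\lambda_1^2}\pnt{1 - e^{-\nu\lambda_1 t}}$, and substituting the Grashof number \eqref{Grashof_3}, i.e.\ $\norm{f}_{\Lp{2}}^2 = \nu^4\lambda_1^{3/2}G^2$, the steady part equals $\nu^2\lambda_1^{-1/2}G^2$. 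Since the transient decays exponentially, there is a $t_0>0$ (depending on $\norm{v(0)}_{\Lp{2}}$, $\nu$, $\lambda_1$, and $G$) beyond which the exponential term is dominated by the steady part; the factor $2$ in the first claimed bound is exactly the room left for this absorbing-ball argument.

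For the time-averaged bound I would return to the energy balance and integrate over $[t, t+\tau]$ for $t \geq t_0$:
\[
\nu\int_t^{t+\tau}\norm{\nabla v(s)}_{\Lp{2}}^2\,ds = \tfrac12\norm{v(t)}_{\Lp{2}}^2 - \tfrac12\norm{v(t+\tau)}_{\Lp{2}}^2 + \int_t^{t+\tau}(f, v)\,ds.
\]
Discarding the nonpositive term, bounding $\norm{v(t)}_{\Lp{2}}^2$ by the absorbing-ball radius just obtained, and estimating $\int_t^{t+\tau}(f,v)\,ds \leq \tau\,\norm{f}_{\Lp{2}}\sup_{s\geq t_0}\norm{v(s)}_{\Lp{2}}$ with the same uniform bound, both surviving terms become explicit multiples of $\nu^2\lambda_1^{\pm 1/2}G^2$. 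Dividing by $\nu$ and collecting them yields the bound with the claimed $(1 + \tau\nu\lambda_1^{1/2})$ structure.

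I expect none of this to be a genuine obstacle: these are the standard a priori estimates inherited from \cite{CHOT}. The one point requiring care is the nonlinear cancellation, which is clean here only because the advecting field $u$ is divergence-free; everything else is the routine bookkeeping of the numerical constants (the factor $2$ and the Young/Poincar\'e constants) needed to match the precise form stated, which is accounting rather than anything conceptual.
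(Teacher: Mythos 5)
Your argument is correct and is essentially the standard proof this proposition rests on: the paper itself gives no proof (it simply cites \cite{CHOT}), and the estimates there are obtained exactly as you do it --- the cancellation $((u\cdot\nabla)v,v)=0$ from $\nabla\cdot u=0$, Young plus Poincar\'e to get a linear differential inequality, Gronwall for the absorbing ball in $\norm{v}_{\Lp{2}}$, and a time integration of the energy inequality for the dissipation bound. One concrete caveat about your closing remark that matching the constants is ``accounting'': your (correct) computation produces a bound of the form $\int_t^{t+\tau}\norm{\nabla v(s)}_{\Lp{2}}^2\,ds\le \bigl(2+\sqrt{2}\,\tau\nu\lambda_1\bigr)\nu\lambda_1^{-1/2}G^2$, which is dimensionally homogeneous, whereas the right-hand side displayed in the proposition, $2(1+\tau\nu\lambda_1^{1/2})\nu G^2$, is not: with the 3D Grashof number \eqref{Grashof_3}, both the factor $\tau\nu\lambda_1^{1/2}$ and the missing $\lambda_1^{-1/2}$ in the prefactor are off by powers of $\lambda_1^{1/2}$ (this looks like a slip inherited from the 2D normalization $G=\norm{f}_{\Lp{2}}/(\nu^2\lambda_1)$, where that form is consistent). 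So no bookkeeping will literally reproduce the stated constant; what your derivation yields is the dimensionally consistent version, which is the meaningful one. Also, a slightly cleaner route to the second bound is to integrate $\od{}{t}\norm{v}_{\Lp{2}}^2+\nu\norm{\nabla v}_{\Lp{2}}^2\le\norm{f}_{\Lp{2}}^2/(\nu\lambda_1)$ (i.e., after Young's inequality but before discarding the dissipation) over $[t,t+\tau]$: this avoids re-estimating $\int(f,v)$ with sup bounds and gives the factor $2(1+\tau\nu\lambda_1)$ directly.
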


We also recall the following bound on the solutions $v$ in the global attractor of \eqref{Leray-alpha} that was proved in \cite{FJL}. This estimate improves the estimate in \cite{CHOT} on the enstrophy $\norm{A^{1/2} v}_{\Lp{2}}^2$.

\begin{proposition} \cite{FJL} \label{unif_bounds_3D_Leray}
Suppose that $v$ is a solution in the global attractor of \eqref{Leray-alpha}, then
\begin{align}
\norm{A^{1/2} v(t)}_{\Lp{2}}^2 \leq \tilde{c} \frac{\nu^2 G^4}{\alpha^4 \lambda_1^{3/2}}, \label{Leray-enstrophy}
\end{align}
for large $t>0$, for some dimensionless constant $\tilde{c}>0$.
\end{proposition}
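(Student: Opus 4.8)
The plan is to run the standard enstrophy (time-derivative of $\norm{A^{1/2}v}_{\Lp{2}}^2$) estimate for dissipative systems: I would test the equation against $Av$, control the resulting nonlinear term by exploiting the extra smoothing $u=(I+\alpha^2A)^{-1}v$ that is built into the Leray-$\alpha$ nonlinearity, and then close the argument with the uniform Gronwall lemma, feeding in the two \emph{a priori} bounds of the preceding proposition. All computations would be carried out formally at the level of the Galerkin approximations and then passed to the limit, which is legitimate because the existence theorem grants $v\in C([0,T];V)\cap L^2([0,T];\mathcal{D}(A))$.

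First I would apply the Helmholz--Leray projector $P_\sigma$ to \eqref{Leray-alpha} and take the $\Lp{2}$ inner product with $Av$. The pressure drops out, $(\partial_t v,Av)=\tfrac12\frac{d}{dt}\norm{A^{1/2}v}_{\Lp{2}}^2$, and the viscous term yields $\nu\norm{Av}_{\Lp{2}}^2$, so that
\begin{align*}
\tfrac12\frac{d}{dt}\norm{A^{1/2}v}_{\Lp{2}}^2+\nu\norm{Av}_{\Lp{2}}^2 = -((u\cdot\nabla)v,Av)+(f,Av).
\end{align*}
In contrast with $((u\cdot\nabla)v,v)$, which vanishes by the divergence-free condition and underlies the energy balance behind \eqref{Leray-G}, the term $((u\cdot\nabla)v,Av)$ does \emph{not} vanish and must be estimated.

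The crux is this nonlinear term, and here the whole point of the $\alpha$-regularization enters. I would write $|((u\cdot\nabla)v,Av)|\le\norm{u}_{\Lp{\infty}}\norm{\nabla v}_{\Lp{2}}\norm{Av}_{\Lp{2}}$ and bound $\norm{u}_{\Lp{\infty}}$ by a three-dimensional Agmon-type inequality $\norm{u}_{\Lp{\infty}}^2\le c\norm{A^{1/2}u}_{\Lp{2}}\norm{Au}_{\Lp{2}}$. Since $u=(I+\alpha^2A)^{-1}v$, the Fourier multiplier estimates give $\norm{Au}_{\Lp{2}}\le\alpha^{-2}\norm{v}_{\Lp{2}}$ and $\norm{A^{1/2}u}_{\Lp{2}}\le\lambda_1^{-1/2}\norm{Au}_{\Lp{2}}\le\lambda_1^{-1/2}\alpha^{-2}\norm{v}_{\Lp{2}}$, whence $\norm{u}_{\Lp{\infty}}^2\le c\,\lambda_1^{-1/2}\alpha^{-4}\norm{v}_{\Lp{2}}^2$; this is where the two factors of $\alpha^{-2}$ are produced. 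Using $\norm{\nabla v}_{\Lp{2}}=\norm{A^{1/2}v}_{\Lp{2}}$, a Young inequality that absorbs part of $\nu\norm{Av}_{\Lp{2}}^2$ and the forcing leaves a differential inequality of the form $\frac{d}{dt}y\le g\,y+h$, namely
\begin{align*}
\frac{d}{dt}\norm{A^{1/2}v}_{\Lp{2}}^2 \le \frac{c}{\nu\lambda_1^{1/2}\alpha^4}\,\norm{v}_{\Lp{2}}^2\,\norm{A^{1/2}v}_{\Lp{2}}^2+\frac{c}{\nu}\norm{f}_{\Lp{2}}^2.
\end{align*}

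Finally I would invoke the uniform Gronwall lemma. For $t\ge t_0$ the absorbing-ball bound $\norm{v(t)}_{\Lp{2}}^2\le 2\nu^2\lambda_1^{-1/2}G^2$ of the preceding proposition renders $g$ a constant of size $\nu G^2/(\alpha^4\lambda_1)$, while \eqref{Leray-G} controls $\int_t^{t+r}\norm{A^{1/2}v}_{\Lp{2}}^2\,ds\sim\nu\lambda_1^{-1/2}G^2$ over a short window. Choosing $r\sim \alpha^4\lambda_1/(\nu G^2)$ so that $\int_t^{t+r}g\,ds=O(1)$ keeps the exponential factor bounded, and then for large $t$ one gets $\norm{A^{1/2}v}_{\Lp{2}}^2\lesssim r^{-1}\!\int_t^{t+r}\norm{A^{1/2}v}_{\Lp{2}}^2\,ds\lesssim \tfrac{\nu^2G^4}{\alpha^4\lambda_1^{3/2}}$, which is \eqref{Leray-enstrophy}. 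The main obstacle is precisely the nonlinear term together with this window balancing: one must extract exactly two powers of $\alpha^{-2}$ from the smoothing operator (no more) and then tune $r$ so that the unavoidable exponential from Gronwall stays $O(1)$ without inflating the time-average; it is this balance, rather than any single inequality, that fixes the final powers of $G$, $\alpha$ and $\lambda_1$.
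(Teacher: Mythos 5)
First, a point of order: the paper never proves Proposition \ref{unif_bounds_3D_Leray}; it is quoted verbatim from \cite{FJL}, so there is no in-paper proof to compare against. Judged on its own merits, your argument is the natural (and almost certainly the intended) route: test with $Av$, bound the advective term via Agmon's inequality together with the smoothing $u=(I+\alpha^2A)^{-1}v$, and close with the uniform Gronwall lemma over a window $r\sim\alpha^4\lambda_1/(\nu G^2)$, feeding in the two bounds of the preceding proposition. Your multiplier estimates $\norm{Au}_{\Lp{2}}\le\alpha^{-2}\norm{v}_{\Lp{2}}$ and $\norm{A^{1/2}u}_{\Lp{2}}\le\lambda_1^{-1/2}\norm{Au}_{\Lp{2}}$ are correct, this is exactly where the $\alpha^{-4}$ enters, and the window balancing does reproduce the leading term $\nu^2G^4/(\alpha^4\lambda_1^{3/2})$ (note, by the way, that you silently replaced the $\tau$-independent part of \eqref{Leray-G} by its dimensionally homogeneous version $\nu\lambda_1^{-1/2}G^2$; that correction is right, but it should be said, since \eqref{Leray-G} as printed would give $\nu^2G^4/(\alpha^4\lambda_1)$ instead).

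The one genuine gap is in the final absorption step. What the uniform Gronwall lemma actually yields with your choices is
\begin{align*}
\norm{A^{1/2}v(t)}_{\Lp{2}}^2 \;\lesssim\; \frac{\nu^2G^4}{\alpha^4\lambda_1^{3/2}} \;+\; \nu^2\lambda_1^{1/2}G^2 \;+\; \nu^2\alpha^4\lambda_1^{5/2},
\end{align*}
where the second term comes from the part of the enstrophy average proportional to $r$, and the third from $\int_t^{t+r} h\,ds = c\nu^{-1}\norm{f}_{\Lp{2}}^2\,r$. Both collapse into the single stated term only under a standing assumption of the form $\alpha^2\lambda_1\lesssim G$ (e.g.\ $\alpha\lambda_1^{1/2}\le 1\le G$: filter length below the domain scale and non-trivial forcing). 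Such a hypothesis is not an artifact of the method: for fixed $\alpha$ and small $G$ the global attractor reduces to the near-Stokes steady state, whose enstrophy is of order $\nu^2\lambda_1^{1/2}G^2$, and this exceeds $\tilde c\,\nu^2G^4/(\alpha^4\lambda_1^{3/2})$ once $\alpha^4\lambda_1^2\gg G^2$ --- so the clean one-term bound \eqref{Leray-enstrophy} cannot be proved without restricting the regime. Your proof is therefore correct in the physically relevant regime, but you must state that regime explicitly rather than let the lower-order terms disappear by fiat.
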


\bigskip

\section{Analysis of the data assimilation algorithm}
We will prove that under certain conditions on $\mu$, the approximate solution $(\vv_1,\vv_2,\vv_3)$ of the data assimilation system \eqref{Leray-alpha-DA} converges to the solution $(v_1,v_2,v_3)$ of the 3D Leray-$\alpha$ \eqref{Leray-alpha}, as $t\rightarrow \infty$, when the observables operators that satisfy \eqref{app}.

\begin{theorem}
Suppose $I_h$ satisfy \eqref{app} and $\mu>0$ and $h>0$ are chosen such that $\mu c_0^2h^2\leq \nu$, where $c_0$ is the constant in \eqref{app}.
Let $v(t, x,y,z)$ be a strong solution of the Leray-$\alpha$ model \eqref{Leray-alpha} and choose $\mu>0$ large enough such that
\begin{align}\label{Leray_alpha_mu_1}
\mu \geq 2 \frac{c\tilde{c} \nu G^4}{\alpha^4\lambda_1},
\end{align}
\and $h>0$ small enough such that $\mu c_0^2h^2\leq \nu$, where the constants $c, \tilde{c}$, and $c_0$ appear in \eqref{beta}, \eqref{Leray-enstrophy} and \eqref{app_F}, respectively.

If the initial data $\vv_0 \in V $ and $f\in H$, then the continuous data assimilation system \eqref{Leray-alpha-DA} possess a unique global strong solution $\vv(t, x,y,z)= (\vv_1(t, x,y,z), \vv_2(t , x,y,z),$ $\vv_3(t , x,y, z))$ that satisfies
\begin{align*}
\vv \in C([0,T];V ) \cap L^2([0,T];\mathcal{D}(A)), \qquad \text{and} \qquad \od{\vv}{t} \in L^2([0,T];H).
\end{align*}

Moreover, the solution $\vv(t, x,y,z)$ depends continuously on the initial data $\vv_{in}$ and it satisfies
$$\norm{v(t)-\vv(t)}_{\Lpd{2}} \rightarrow 0,$$ at exponential rate, as $t \rightarrow \infty$.
\end{theorem}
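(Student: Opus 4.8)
The plan is to establish well-posedness of \eqref{Leray-alpha-DA} by a Galerkin procedure whose a priori bounds come from the very energy estimate that drives the convergence, so I will concentrate on the convergence claim and only indicate the existence/uniqueness at the end. Write $w = v - \vv$ for the difference of the (rough) momenta and $\tilde w = u - \uu$ for the difference of the smoothed velocities, so that $w = \tilde w - \alpha^2\Delta\tilde w = (I+\alpha^2 A)\tilde w$ and both $w,\tilde w$ are divergence free with zero spatial average. Subtracting \eqref{Leray-alpha-DA} from \eqref{Leray-alpha} and using the linearity of $I_h$ (so that $I_h(v_i)-I_h(\vv_i)=I_h(w_i)$), the first two components obey
\[
\partial_t w_i - \nu\Delta w_i + (\tilde w\cdot\nabla)v_i + (\uu\cdot\nabla)w_i = -\partial_i(p-\p) - \mu I_h(w_i),\qquad i=1,2,
\]
while the unobserved component carries no feedback term,
\[
\partial_t w_3 - \nu\Delta w_3 + (\tilde w\cdot\nabla)v_3 + (\uu\cdot\nabla)w_3 = -\partial_z(p-\p).
\]
The strategy is to run an $\Lp{2}$ energy estimate on $w$, absorb the interpolation error into the viscosity, control the nonlinearity, and close with a Poincar\'e/Gronwall argument forcing $\norm{w}_{\Lpd{2}}$ to decay exponentially.

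Testing the difference equations against $w$ in $\Lp{2}$, the pressure gradient drops out because $\nabla\cdot w=0$, and the transport term $(\uu\cdot\nabla)w$ tested against $w$ vanishes since $\nabla\cdot\uu=0$, leaving
\[
\tfrac12\tfrac{d}{dt}\norm{w}_{\Lp{2}}^2 + \nu\norm{\nabla w}_{\Lp{2}}^2 = -\big((\tilde w\cdot\nabla)v,\,w\big) - \mu\sum_{i=1,2}\big(I_h(w_i),w_i\big).
\]
For the nudging terms I would write $(I_h(w_i),w_i)=\norm{w_i}_{\Lp{2}}^2+(I_h(w_i)-w_i,w_i)$ and use \eqref{app_F} together with Young's inequality to obtain $-\mu(I_h(w_i),w_i)\le-\tfrac{\mu}{2}\norm{w_i}_{\Lp{2}}^2+\tfrac{\mu c_0^2h^2}{2}\norm{\nabla w_i}_{\Lp{2}}^2$; the hypothesis $\mu c_0^2h^2\le\nu$ lets the gradient piece be reabsorbed into $\nu\norm{\nabla w}_{\Lp{2}}^2$, leaving $-\tfrac{\mu}{2}(\norm{w_1}_{\Lp{2}}^2+\norm{w_2}_{\Lp{2}}^2)$ and at least $\tfrac{\nu}{2}\norm{\nabla w}_{\Lp{2}}^2$ on the good side.

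For the nonlinear term I would integrate by parts using $\nabla\cdot\tilde w=0$ (rewriting it as $-((\tilde w\cdot\nabla)w,v)$), then estimate it with H\"older, the Sobolev embedding \eqref{sobolev}, and the norm equivalence \eqref{chot} that trades the regularity of $\tilde w=(I+\alpha^2A)^{-1}w$ for $\norm{w}_{\Lp{2}}$, producing a bound of the form $\tfrac{\nu}{4}\norm{\nabla w}_{\Lp{2}}^2+\tfrac{c}{\nu}\norm{A^{1/2}v}_{\Lp{2}}^2\norm{w}_{\Lp{2}}^2$. The delicate feature is that $\norm{w}_{\Lp{2}}^2=\norm{w_1}_{\Lp{2}}^2+\norm{w_2}_{\Lp{2}}^2+\norm{w_3}_{\Lp{2}}^2$ contains the \emph{unobserved} component $w_3$, which receives no relaxation. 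Here I would invoke the divergence-free condition in the form $\partial_z w_3=-(\partial_x w_1+\partial_y w_2)$, whence $\norm{\partial_z w_3}_{\Lp{2}}\le\norm{\nabla w_1}_{\Lp{2}}+\norm{\nabla w_2}_{\Lp{2}}$, and combine it with \eqref{poincare} so that the $w_3$-contribution is dominated by the retained viscous dissipation $\tfrac{\nu}{2}\norm{\nabla w}_{\Lp{2}}^2$, while the $w_1,w_2$-contributions are absorbed by the nudging once $\mu$ is as large as in \eqref{Leray_alpha_mu_1}. The attractor bound \eqref{Leray-enstrophy} of Proposition~\ref{unif_bounds_3D_Leray} is exactly what turns $\norm{A^{1/2}v}_{\Lp{2}}^2$ into a fixed constant for large $t$, and is the origin of the threshold \eqref{Leray_alpha_mu_1}.

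After these absorptions I expect, for $t$ beyond the time the reference solution enters its absorbing ball, a differential inequality $\tfrac{d}{dt}\norm{w}_{\Lp{2}}^2+\beta\norm{w}_{\Lp{2}}^2\le0$ with $\beta>0$ (from Poincar\'e on the leftover gradient together with the nudging), whence Gronwall yields the asserted exponential decay $\norm{v(t)-\vv(t)}_{\Lpd{2}}\to0$. Well-posedness then follows from the Galerkin scheme: treating $I_h(v_i)$ as a prescribed $\Lp{2}$ forcing, the same energy estimate gives uniform bounds placing $\vv$ in $C([0,T];V)\cap L^2([0,T];\mathcal D(A))$ with $\partial_t\vv\in L^2([0,T];H)$, and an analogous estimate on the difference of two solutions delivers uniqueness and continuous dependence on $\vv_{in}$. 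I expect the principal obstacle to be precisely the handling of the third component: since $w_3$ is not nudged, the entire argument hinges on showing that its contribution to the nonlinear estimate is controllable through the divergence-free relation and the viscous dissipation alone, rather than through a feedback term of its own, and on verifying that the powers of $G$, $\alpha$ and $\lambda_1$ in \eqref{Leray_alpha_mu_1} are indeed sufficient to close the absorption.
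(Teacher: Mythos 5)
Your overall architecture (difference equations for $w=v-\vv$, absorption of the nudging terms via \eqref{app_F} and $\mu c_0^2h^2\le\nu$, vanishing of the pressure and transport terms, Gronwall at the end) matches the paper, but your treatment of the nonlinear term has a genuine gap, and it is precisely the gap the paper's componentwise proof is engineered to avoid. Your single global estimate yields a right-hand side of the form $\frac{c}{\nu}K(t)\,\norm{w}_{\Lp{2}}^2$ with $K(t)$ comparable to $\lambda_1^{1/2}\norm{A^{1/2}v}_{\Lp{2}}^2$, and you propose to absorb the unobserved part $\frac{c}{\nu}K(t)\norm{w_3}_{\Lp{2}}^2$ into the dissipation $\frac{\nu}{2}\norm{\nabla w}_{\Lp{2}}^2$. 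Poincar\'e gives $\norm{w_3}_{\Lp{2}}^2\le\lambda_1^{-1}\norm{\nabla w_3}_{\Lp{2}}^2$, so this absorption requires $\frac{c}{\nu\lambda_1}K(t)\le\frac{\nu}{2}$; by the attractor bound \eqref{Leray-enstrophy} this amounts to a smallness condition of the type $G^4\lesssim \alpha^4\lambda_1^{2}$ on the Grashof number, which the theorem does not assume (and in which regime the reference solution is essentially globally stable anyway, trivializing the problem). Note that $K(t)$ is of the same order as the threshold for $\mu$ in \eqref{Leray_alpha_mu_1}, which is exactly why it cannot be dominated by viscosity: it must be beaten by $\mu$, and $\mu$ acts only on $w_1,w_2$. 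Your more refined suggestion, $\norm{\partial_z w_3}_{\Lp{2}}\le\norm{\nabla w_1}_{\Lp{2}}+\norm{\nabla w_2}_{\Lp{2}}$ combined with Poincar\'e, cannot rescue this either: $\partial_z w_3$ does not control $w_3$ at all (take $w=(0,0,\sin(2\pi x/L))$, which is divergence free, periodic and mean free, with $\partial_z w_3\equiv 0$ but $w_3\not\equiv 0$), and falling back on the full Poincar\'e inequality \eqref{poincare} reproduces exactly the constant computed above.

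What the paper does instead is arrange that $\norm{\dV_3}_{\Lp{2}}^2$ never appears next to the large coefficient at all. The nine trilinear terms $(\dU_j\,\partial_j v_i,\dV_i)$ are estimated one at a time: in each term the advecting factor $\dU_j$ takes the $L^\infty$--Sobolev--\eqref{chot} route and becomes $\norm{\dV_j}_{\Lp{2}}$, and whichever factor carries the index $3$ is converted by Poincar\'e into $\norm{A^{1/2}\dV_3}_{\Lp{2}}$ and absorbed into the dissipation with small coefficients ($\nu/8$, $\nu/20$), so that the surviving $L^2$ factor is always $\norm{\dV_1}_{\Lp{2}}^2$ or $\norm{\dV_2}_{\Lp{2}}^2$; see \eqref{11}--\eqref{88}. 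The one term where this is impossible --- $J_{3c}=(\dU_3\,\partial_z v_3,\dV_3)$, where both indices are $3$ --- is handled by integration by parts together with the divergence-free substitutions $\partial_z\dU_3=-(\partial_x\dU_1+\partial_y\dU_2)$ and $\partial_z\dV_3=-(\partial_x\dV_1+\partial_y\dV_2)$, which trade the vertical components for horizontal ones and yield \eqref{99}. The result is the differential inequality $\od{}{t}\norm{\dV}_{\Lp{2}}^2+\frac{\nu}{2}\norm{A^{1/2}\dV}_{\Lp{2}}^2\le\bigl(\frac{c\lambda_1^{1/2}}{\nu}\norm{A^{1/2}v}_{\Lp{2}}^2-\mu\bigr)\bigl(\norm{\dV_1}_{\Lp{2}}^2+\norm{\dV_2}_{\Lp{2}}^2\bigr)$, in which only the observed components multiply the large coefficient, so condition \eqref{Leray_alpha_mu_1} closes the argument with no restriction on $G$. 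Your instinct that the divergence-free condition is the key to controlling the unobserved component is correct, but it has to be deployed inside this componentwise bookkeeping (and only for the single term $J_{3c}$), not as an a posteriori absorption of $\norm{w_3}_{\Lp{2}}^2$ into the viscosity.
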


\begin{proof}
Define $\tilde{p} = p - \p$, $\dU = u-\uu$, and $\dV = v-\vv$, thus $\dV = \dU - \aa^2\Delta\dU$. Then $\dV_1$, $\dV_2$ and $\dV_3$ satisfy the equations

\begin{subequations}
\begin{align}
\label{leray-comp1}
\pd{\dV_1}{t} - \Delta\dV_1+\dU_1\pp_x v_1 + \dU_2\pp_yv_1 + \dU_3\pp_zv_1 + (\uu\cdot\nabla)\dV_1 +\pp_x \tilde{p}& = -\mu I_h(\dV_1), \\
\label{leray-comp2}
\pd{\dV_2}{t} - \Delta\dV_2+\dU_1\pp_x v_2 + \dU_2\pp_yv_2 + \dU_3\pp_zv_2 + (\uu\cdot\nabla)\dV_2 +\pp_y \tilde{p} &=-\mu I_h(\dV_2), \\
\label{leray-comp3}
\pd{\dV_3}{t} - \Delta\dV_3+\dU_1\pp_x v_3 + \dU_2\pp_yv_3 + \dU_3\pp_zv_3 + (\uu\cdot\nabla)\dV_3+ \pp_z \tilde{p}& = 0, \\
\pp_x \dV_1 + \pp_y\dV_2 + \pp_z\dV_3 =  \pp_x \dU_1 + \pp_y\dU_2 + \pp_z\dU_3&= 0 \label{div_dV}.
\end{align}
\end{subequations}

Since we assume that $v$ is a reference solution of system \eqref{Leray-alpha}, then it is enough to show the existence and uniqueness of the difference $\dV$. In the proof below, we will derive formal \textit{a-priori} bounds on $\dV$, under appropriate conditions on $\mu$ and $h$. These \textit{a-priori} estimates, together with the global existence and uniqueness of the solution $v$, form the key elements for showing the global existence of the solution $\vv$ of system \eqref{Leray-alpha-DA}. The convergence of the approximate solution $\vv$ to the exact reference solution $v$ will also be established under the tighter condition on the nudging parameter $\mu$ as stated in \eqref{Leray_alpha_mu_1}. Uniqueness can then be obtained using similar energy estimates.

The estimates we provide in this proof are formal, but can be justified by the Galerkin approximation procedure
and then passing to the limit while using the relevant compactness theorems. We
will omit the rigorous details of this standard procedure (see, e.g., \cite{Constantin_Foias_1988, Robinson, Temam_2001_Th_Num}) and provide only the formal \textit{a-priori} estimates.

Taking the $\Lp{2}$-inner product of \eqref{leray-comp1}, \eqref{leray-comp2} and  \eqref{leray-comp3} with $\dV_1$, $\dV_2$ and $\dV_3$, respectively, and obtain
\begin{align*}
\frac{1}{2}\od{}{t}\norm{\dV_1}_{\Lp{2}}^2 + \nu\norm{A^{1/2}\dV_1}_{\Lp{2}}^2 &\leq \abs{J_1} - (\pp_x\dP,\dV_1) - \mu(I_h(\dV_1) ,\dV_1),\\
\frac{1}{2}\od{}{t}\norm{\dV_2}_{\Lp{2}}^2 + \nu\norm{A^{1/2}\dV_2}_{\Lp{2}}^2 &\leq \abs{J_2} - (\pp_y\dP,\dV_2) - \mu(I_h(\dV_2) ,\dV_2),\\
\frac{1}{2}\od{}{t}\norm{\dV_3}_{\Lp{2}}^2 + \nu\norm{A^{1/2}\dV_1}_{\Lp{2}}^2 &\leq \abs{J_3} - (\pp_z\dP,\dV_3),
\end{align*}
where
\begin{align*}
J_1 := J_{1a} + J_{1b} + J_{1c} := (\dU_1\pp_x v_1, \dV_1) + (\dU_2\pp_yv_1, \dV_1) + (\dU_3\pp_zv_1, \dV_1),  \\
J_2 := J_{2a} + J_{2b} + J_{2c} := (\dU_1\pp_x v_2, \dV_2) + (\dU_2\pp_yv_2, \dV_2) + (\dU_3\pp_zv_2, \dV_2),  \\
J_3 := J_{3a} + J_{3b} + J_{3c} := (\dU_1\pp_x v_3, \dV_3) + (\dU_2\pp_yv_3, \dV_3) + (\dU_3\pp_zv_3, \dV_3).
\end{align*}

By H\"older inequality, inequality \eqref{chot} and the Sobolev inequality \eqref{sobolev}, we can show that
\begin{align}\label{11}
|J_{1a}| = \abs{(\dU_1\pp_x v_1, \dV_1)} &\leq \|\pp_x v_1\|_{\Lp{2}} \norm{\dU_1}_{\Lp{\infty}} \norm{\dV_1}_{\Lp{2}}\notag \\
&\leq c{\la_1^{-1/4}} \norm{\pp_xv_1}_{\Lp{2}} \|\dU_1\|_{\Hp{2}}\norm{\dV_1}_{\Lp{2}}\notag \\
&\leq c\la_1^{3/4} \norm{\pp_xv_1}_{\Lp{2}} \norm{\dV_1}^2_{\Lp{2}}\notag \\
&\leq c\la_1^{1/4} \norm{\pp_xv_1}_{\Lp{2}} \norm{\dV_1}_{\Lp{2}}\norm{A^{1/2}\dV_1}_{\Lp{2}}\notag\\
&\leq \frac{\nu}{8}\norm{A^{1/2}\dV_1}_{\Lp{2}}^2 + \frac{c\la_1^{1/2}}{\nu} \norm{\pp_xv_1}^2_{\Lp{2}} \norm{\dV_1}^2_{\Lp{2}}.
\end{align}
Using similar analysis as above, 
we obtain the following estimates:
\begin{align}
|J_{1b}|  = \abs{(\dU_2\pp_yv_1, \dV_1)} &\leq \frac{\nu}{8}\norm{A^{1/2}\dV_2}_{\Lp{2}}^2 + \frac{c\la_1^{1/2}}{\nu} \norm{\pp_yv_1}^2_{\Lp{2}} \norm{\dV_1}^2_{\Lp{2}},\label{22}\\
|J_{1c}| =\abs{(\dU_3\pp_zv_1, \dV_1)} &\leq \frac{\nu}{20}\norm{A^{1/2}\dV_3}_{\Lp{2}}^2 + \frac{c\la_1^{1/2}}{\nu} \norm{\pp_zv_1}^2_{\Lp{2}} \norm{\dV_1}^2_{\Lp{2}}, \label{33}\\
|J_{2a}|  = \abs{(\dU_1\pp_x v_2, \dV_2)} &\leq \frac{\nu}{8}\norm{A^{1/2}\dV_1}_{\Lp{2}}^2 + \frac{c\la_1^{1/2}}{\nu} \norm{\pp_xv_2}^2_{\Lp{2}} \norm{\dV_2}^2_{\Lp{2}},\label{44}\\
|J_{2b}| = \abs{(\dU_2\pp_yv_2, \dV_2)} &\leq \frac{\nu}{8}\norm{A^{1/2}\dV_2}_{\Lp{2}}^2 + \frac{c\la_1^{1/2}}{\nu} \norm{\pp_yv_2}^2_{\Lp{2}} \norm{\dV_2}^2_{\Lp{2}},\label{55}\\
|J_{2c}| = \abs{(\dU_3\pp_zv_2, \dV_2)}&\leq \frac{\nu}{20}\norm{A^{1/2}\dV_3}_{\Lp{2}}^2 + \frac{c\la_1^{1/2}}{\nu} \norm{\pp_zv_2}^2_{\Lp{2}} \norm{\dV_2}^2_{\Lp{2}},\label{66}\\
|J_{3a}| = \abs{(\dU_1\pp_x v_3, \dV_3)}&\leq \frac{\nu}{20}\norm{A^{1/2}\dV_3}_{\Lp{2}}^2 + \frac{c\la_1^{1/2}}{\nu} \norm{\pp_xv_3}^2_{\Lp{2}} \norm{\dV_1}^2_{\Lp{2}}\label{77},\\
|J_{3b}| =\abs{(\dU_2\pp_yv_3, \dV_3)}&\leq \frac{\nu}{20}\norm{A^{1/2}\dV_3}_{\Lp{2}}^2 + \frac{c\la_1^{1/2}}{\nu} \norm{\pp_yv_3}^2_{\Lp{2}} \norm{\dV_2}^2_{\Lp{2}}.\label{88}
\end{align}

Next, using integration by parts and the divergence free condition \eqref{div_dV} we obtain
\begin{align*}
J_{3c} = (\dU_3\pp_zv_3, \dV_3) &= -(v_3, \pp_z(\dU_3\dV_3))\notag \\
& = -(v_3,\pp_z\dU_3\dV_3) - (v_3, \dU_3\pp_z\dV_3)\notag \\
& = (v_3, (\pp_x\dU_1 + \pp_y\dU_2)\dV_3) + (v_3, \dU_3(\pp_x\dU_1 + \pp_y\dU_2) )\notag\\
& =:  J_{3d} + J_{3e}.
\end{align*}
Integration by parts once again implies
\begin{align*}
J_{3d} &= (v_3, (\pp_x\dU_1 + \pp_y\dU_2)\dV_3) \notag \\
& = - (v_3, \dU_1\pp_x\dV_3) - (v_3, \dU_2\pp_y\dV_3) - (\pp_xv_3, \dU_1\dV_3) - (\pp_yv_3, \dU_2\dV_3)\notag \\
&=: J_{3d1} + J_{3d2} + J_{3d3} + J_{3d4},
\end{align*}
and
\begin{align*}
J_{3e} & = (v_3, \dU_3(\pp_x\dU_1 + \pp_y\dU_2) )\\
& = - (v_3,\pp_x\dU_3\dV_1) - (v_3, \pp_y\dU_3\dV_2) - (\pp_xv_3, \dU_3\dV_1) - (\pp_yv_3, \dU_3\dV_2)\\
&=: J_{3e1} + J_{3e2} + J_{3e3} + J_{3e4}.
\end{align*}
Using H\"older inequality, inequality \eqref{chot} and Sobolev inequality \eqref{sobolev}, we have
\begin{align*}
|J_{3d1}|  = |(v_3, \dU_1\pp_x\dV_3)| &\leq \norm{v_3}_{\Lp{2}}\norm{\dU_1}_{\Lp{\infty}}\norm{\pp_x\dV_3}_{\Lp{2}} \notag \\
& \leq c \la_1^{-1/4} \norm{v_3}_{\Lp{2}}\norm{\dU_1}_{\Hp{2}}\norm{\pp_x\dV_3}_{\Lp{2}}\notag\\
&\leq c\la_1^{3/4}\norm{v_3}_{\Lp{2}}\norm{\dV_1}_{\Lp{2}}\norm{\pp_x\dV_3}_{\Lp{2}}\notag\\
&\leq c\la_1^{1/4}\norm{A^{1/2} v_3}_{\Lp{2}}\norm{\dV_1}_{\Lp{2}}\norm{\pp_x\dV_3}_{\Lp{2}}\notag \\
&\leq\frac{\nu}{20}\norm{\pp_x\dV_3}^2_{\Lp{2}} + \frac{c\la_1^{1/2}}{\nu} \norm{A^{1/2} v_3}^2_{\Lp{2}}\norm{\dV_1}^2_{\Lp{2}},
\end{align*}
and similarly,
\begin{align*}
|J_{3d2}| = |(v_3, \dU_2\pp_y\dV_3)| &\leq\frac{\nu}{20}\norm{\pp_y\dV_3}^2_{\Lp{2}} + \frac{c\la_1^{1/2}}{\nu} \norm{A^{1/2} v_3}^2_{\Lp{2}}\norm{\dV_2}^2_{\Lp{2}}.
\end{align*}
By a similar argument as in \eqref{11}, we can show that
\begin{align*}
|J_{3d3}| = \abs{(\pp_xv_3, \dU_1\dV_3)}
&\leq\frac{\nu}{20}\norm{A^{1/2}\dV_3}^2_{\Lp{2}} + \frac{c\la_1^{1/2}}{\nu} \norm{\pp_xv_3}^2_{\Lp{2}}\norm{\dV_1}^2_{\Lp{2}},
\end{align*}
and
\begin{align*}
|J_{3d4}| =\abs{(\pp_yv_3, \dU_3\dV_2)}
&\leq\frac{\nu}{20}\norm{A^{1/2}\dV_3}^2_{\Lp{2}} + \frac{c\la_1^{1/2}}{\nu} \norm{\pp_yv_3}^2_{\Lp{2}}\norm{\dV_2}^2_{\Lp{2}}.
\end{align*}
Thus,
\begin{align*}
|J_{3d}| \leq \frac{\nu}{5}\norm{A^{1/2}\dV_3}^2_{\Lp{2}} + \frac{c\la_1^{1/2}}{\nu}\norm{A^{1/2} v_3}^2_{\Lp{2}}\left( \norm{\dV_1}^2_{\Lp{2}} + \norm{\dV_2}^2_{\Lp{2}}     \right).
\end{align*}
We apply similar calculations to $J_{3e}$ and obtain

\begin{align*}
|J_{3e}|  \leq \frac{\nu}{5}\norm{A^{1/2}\dV_3}^2_{\Lp{2}} + \frac{c\la_1^{1/2}}{\nu}\norm{A^{1/2} v_3}^2_{\Lp{2}}\left( \norm{\dV_1}^2_{\Lp{2}} + \norm{\dV_2}^2_{\Lp{2}} \right).
\end{align*}
This yield
\begin{align}\label{99}
\abs{J_{3c}} &= \abs{(\dU_3\pp_zv_3, \dV_3)}\notag \\
& \leq \frac{2\nu}{5}\norm{A^{1/2}\dV_3}^2_{\Lp{2}} + \frac{c\la_1^{1/2}}{\nu}\norm{A^{1/2} v_3}^2_{\Lp{2}}\left( \norm{\dV_1}^2_{\Lp{2}} + \norm{\dV_2}^2_{\Lp{2}} \right).
\end{align}
Young's inequality and the assumption $\mu c_0^2h^2 \leq \nu$ imply that
\begin{align}\label{1010}
-\mu(I_h(\dV_i),\dV_i)
&= -\mu (I_h(\dV_i)-\dV_i,\dV_i) - \mu\norm{\dV_i}_{\Lp{2}}^2 \notag \\
& \leq \mu c_0 h \norm{\dV_i}_{\Lp{2}}\norm{A^{1/2}\dV_i}_{\Lp{2}} - \mu \norm{\dV_i}_{\Lp{2}}^2\notag \\
& \leq \frac{\nu}{2}\norm{A^{1/2}\dV_i}_{\Lp{2}}^2  - \frac{\mu}{2} \norm{\dV_i}_{\Lp{2}}^2, \qquad i=1,2.
\end{align}
Also we note that
\begin{align}\label{1111}
(\partial_x\dP, \dV_1) + (\partial_y\dP, \dV_2) + (\partial_z\dP, \dV_3) =0,
\end{align}
due to integration by parts, the boundary conditions, and the divergence free condition \eqref{div_dV}.

Combining all the bounds \eqref{11}--\eqref{1111} and denoting $\norm{\dV_H}^2_{\Lp{2}}: = \norm{\dV_1}^2_{\Lp{2}}+ \norm{\dV_2}^2_{\Lp{2}}$,  we obtain
\begin{align*}
\od{}{t} \norm{\dV}_{\Lp{2}}^2 + \frac\nu2\norm{A^{1/2}\dV}_{\Lp{2}}^2 \leq \left(\frac{c\la_1^{1/2}}{\nu}\norm{A^{1/2} v}^2_{\Lp{2}} - \mu\right)\norm{\dV_H}^2_{\Lp{2}},
\end{align*}
or, using Poincar\'e inequality \eqref{poincare}, we have
\begin{align}
\od{}{t} \norm{\dV}_{\Lp{2}}^2 + \frac{\nu\la_1}{2}\norm{\dV}_{\Lp{2}}^2 + \beta(t)\norm{\dV_H}^2_{\Lp{2}}\leq 0,
\end{align}
where
\begin{align}\label{beta}
\beta(t) := \mu - \frac{c\la_1^{1/2}}{\nu}\norm{A^{1/2} v}^2_{\Lp{2}}.
\end{align}

Since $v(t)$ is a solution in the global attractor of \eqref{Leray-alpha}, then $\norm{A^{1/2} v}_{\Lp{2}}^2$ satisfies the bound \eqref{Leray-enstrophy} for $t>t_0$, for some large enough $t_0>0$. Now, the assumption \eqref{Leray_alpha_mu_1}, yields
\begin{align*}
\od{}{t} \norm{\dV}_{\Lp{2}}^2 +\min\bigg\{\frac{\nu\la_1}{2}, \,\frac{\mu}{2}\bigg\}\norm{\dV}^2_{\Lp{2}}\leq 0,
\end{align*}
for $t>t_0$. By Gronwall's lemma, we obtain
$$\norm{v(t)-\vv(t)}_{\Lp{2}}^2 = \norm{\dV(t)}_{\Lp{2}}\rightarrow 0,$$ at an exponential rate, as $t\rightarrow\infty$.
\end{proof}

\bigskip
\section*{Acknowledgements}
We would like to thank Professor M.\ Ghil for the stimulating exchange concerning the Charney conjecture and for pointing out to us some of the relevant references.  E.S.T.\ is thankful to the kind hospitality of ICERM, Brown University, where part of this work was completed. The work of A.F.\ is supported in part by the NSF grant  DMS-1418911. The work of E.L.\ is supported in part by the ONR grant N0001416WX01475, N0001416WX00796 and HPC grant W81EF61205768.  The work of  E.S.T.\  is supported in part by the ONR grant N00014-15-1-2333 and the NSF grants DMS-1109640 and DMS-1109645.

\bigskip

\end{document}